\theoremstyle{plain}
\newtheorem{theorem}{Theorem}[section]
\newtheorem{lemma}[theorem]{Lemma}
\newtheorem{remark}[theorem]{Remark}
\newtheorem{question}[theorem]{Question}
\title[Geometric triangulations of the figure eight knot complement]{Constructing infinitely many geometric triangulations of the figure eight knot complement}
\author{Blake Dadd}
\author{Aochen Duan}
\address{School of Mathematics and Statistics, University of Melbourne, Parkville, VIC 3010, Australia}
\email{b.dadd@student.unimelb.edu.au,  aochend@student.unimelb.edu.au}
\begin{document}
\maketitle

\section{Introduction}\label{sect:intro}
A triangulation of a 3-manifold is a combinatorial description of that manifold as a set of tetrahedra with faces identified in pairs. 
Every 3-manifold admits infinitely many triangulations, but it is desirable to find triangulations that support additional structure.  
Cusped hyperbolic 3-manifolds (non-compact 3-manifolds with finite volume) form a diverse family of 3-manifolds and the appropriate combinatorial decomposition for them is into ideal tetrahedra, which are tetrahedra with their vertices removed. 
It is conjectured that every cusped hyperbolic 3-manifold admits an ideal triangulation such that each tetrahedron can be isometrically embedded in hyperbolic space $\mathbb{H}^3$  as a geodesic tetrahedron with all four vertices on the sphere at infinity and all dihedral angles positive.
Such a triangulation is called \emph{geometric} and gives a hyperbolic metric on the underlying manifold.
Once such a geometric triangulation is found we could also ask how many such triangulations exist. This paper considers that question for one of the simplest hyperbolic 3-manifolds: the figure eight knot complement. The main result of the investigation is the following theorem.

\begin{theorem}\label{thm:mainthm}
There are infinitely many geometric ideal triangulations of the figure eight knot complement, connected by Pachner 2-3 moves. 
\end{theorem}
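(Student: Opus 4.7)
The plan is to produce an infinite chain $T_0, T_1, T_2, \ldots$ of geometric ideal triangulations of the figure-eight knot complement in which each $T_{n+1}$ is obtained from $T_n$ by a single Pachner 2-3 move preserving geometricity. Since a 2-3 move strictly increases the number of tetrahedra, the $T_n$ are automatically pairwise distinct, and by construction any two of them are connected by a finite sequence of 2-3 moves (read in the appropriate direction). The starting point $T_0$ is the classical decomposition of the figure-eight knot complement into two regular ideal tetrahedra with shape parameter $e^{i\pi/3}$, which is geometric.

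The central technical input is the condition under which a 2-3 move preserves geometricity. If $\Delta_1, \Delta_2$ are two positively oriented ideal tetrahedra meeting along a face $F$, the 2-3 move replaces them by three ideal tetrahedra sharing the edge between the two vertices opposite $F$. A short calculation with the four ideal vertices of $\Delta_1 \cup \Delta_2$ in the upper half-space model shows that the three resulting tetrahedra are all positively oriented if and only if the bipyramid $\Delta_1 \cup \Delta_2$ is strictly convex at $F$, equivalently the sum of the dihedral angles of $\Delta_1$ and $\Delta_2$ at each of the three edges of $F$ is strictly less than $\pi$. The first concrete step is to perform one such move on $T_0$, checking this convexity by hand, to obtain an explicit geometric $T_1$ with three tetrahedra.

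To continue, I would seek a \emph{recursive recipe} producing $T_{n+1}$ from $T_n$: for instance, always performing the next 2-3 move on a specific face adjacent to the central edge just introduced, so that the combinatorics and shape parameters evolve in a self-similar fashion. The main obstacle is to prove that the strict convexity condition survives at every step of this iteration. As $n$ grows the tetrahedra of $T_n$ inevitably become degenerate in shape, and one must show that no dihedral angle ever reaches $0$ or $\pi$ and that the angles across the operating face never sum to exactly $\pi$. I expect the cleanest route is to identify an explicit algebraic recursion for the shape parameters, or to exploit the fact that the figure-eight knot complement fibers over $S^1$ with once-punctured torus fiber, whose monodromy acts on the Farey tessellation and gives a natural \emph{layered} structure in which each 2-3 move corresponds to crossing a Farey edge; positivity of shape parameters in any such layering then follows from the hyperbolicity of the monodromy. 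Once uniform positivity is established, the sequence $\{T_n\}$ provides the required infinite family and proves the theorem.
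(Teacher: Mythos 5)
Your overall strategy is the same as the paper's: build an infinite chain $T_0, T_1, T_2, \ldots$ by repeated 2-3 moves, each preserving geometricity, starting from the two regular ideal tetrahedra. Your criterion for a single 2-3 move to be geometric is also correct: the new equatorial parameters are the products $z'_Az'_B$, $z''_Az_B$, $z_Az''_B$ of the old ones, so all three new tetrahedra are positively oriented exactly when the dihedral angles of $A$ and $B$ sum to less than $\pi$ at each of the three equatorial edges, i.e.\ when the bipyramid is strictly convex there.

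However, there is a genuine gap: you never actually close the induction. You write that you ``would seek a recursive recipe'' and that you ``expect'' an algebraic recursion or a fibration argument to give uniform positivity, but neither is carried out, and this is precisely where all the work lies. The paper resolves it by choosing the face for each move so that a self-similar configuration recurs: the two tetrahedra $A$, $B$ to be merged share \emph{two} faces ($A123=B230$ and $A012=B013$), have equal parameters $z_A=z_B=z$ with $\operatorname{Re}(z)<1$ and $\operatorname{Im}(z)>0$, and the 2-3 move then produces parameters $z_C=1/(1-z)^2$ and $z_D=z_E=z-1$. All three have positive imaginary part, and the pair $D$, $E$ again satisfies the same three hypotheses (equal parameters, real part less than $1$, the same two shared faces after relabelling), so the induction closes starting from $z=\tfrac{1+\sqrt{-3}}{2}$. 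Without exhibiting such an invariant, your convexity condition could in principle fail after finitely many steps -- and indeed the paper's Pachner-graph data show that most choices of face do lead to flat or negatively oriented tetrahedra, so the choice of face genuinely matters. Your proposed alternative via the once-punctured torus fibration and the Farey tessellation is also not a safe fallback: the 2-3 moves here are interior moves, not layerings, and even for layered triangulations of punctured-torus bundles only the monodromy triangulation is known to be geometric, so ``positivity follows from hyperbolicity of the monodromy'' does not apply to the triangulations you would be constructing.
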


As far as we know, this is the first construction of infinitely many geometric triangulations of a cusped hyperbolic 3-manifold.
The proof of this theorem will be deferred to Section ~\ref{sect:thmproof}. This result is made more surprising when contrasted with the behaviour observed in a very similar manifold: the figure eight sister. The approach used to prove Theorem ~\ref{thm:mainthm} did not simply generalise for the figure eight sister, and it is unknown if there are infinitely many geometric triangulations for the figure eight sister. The details of the comparison of the results for the figure eight knot complement and the figure eight sister are contained in Section ~\ref{sect:sister}.

\subsection{Acknowledgements}
 Both authors were supported by the Vacation Scholarship Program in the School of Mathematics and Statistics at The University of Melbourne. The authors would like to thank Craig Hodgson and Neil Hoffman for their supervision and advice.

\section{Background}\label{sect:background}

For background we refer the reader to Thurston's notes~\cite{ThurstonNotes}, especially Chapter 4 which discusses the gluing consistency equations. The hyperbolic geometry of an ideal tetrahedron can be described by associating a complex parameter in 
$\mathbb{C} \setminus \{0,1\}$  to each pair of opposite edges. 
In a labelled triangulation, the associated edge parameters to tetrahedron $A$ on edges $01$, $02$ and $03$  will be denoted $z_A$, $z'_A$ and $z''_A$ respectively (and similarly for tetrahedra $B$, etc.) as in Figure ~\ref{fig:LabelTet}. These complex parameters encode, through the principal value of their complex arguments, the dihedral angles of the tetrahedron. The relationship between the tetrahedral parameters in Figure ~\ref{fig:LabelTet} is dependent on a choice of orientation for the tetrahedra and the complex tetrahedral parameters encode the dihedral angles only when they have positive imaginary parts. 

The question of whether or not an ideal triangulation consisting of such ideal hyperbolic tetrahedra will produce a consistent cusped hyperbolic 3-manifold structure was answered by Thurston ~\cite{ThurstonNotes}. For consistency around an edge class in the triangulation it is required that the product of all complex parameters associated to the edges in the edge class 
is equal to $1$ and that the sum of their principal arguments (the dihedral angles) is equal to $2\pi$. There is also a completeness condition: 
the resulting hyperbolic 3-manifold will be complete if and only if there exist horospherical cusp cross sections which form a closed Euclidean 2-manifold under the action of the face identifications. 

We say that an ideal triangulation of a cusped hyperbolic 3-manifold is \emph{geometric} if there exist complex tetrahedral shape parameters which are consistent with these edge and cusp conditions and the orientation condition, that all tetrahedral parameters have positive imaginary part.  It is sometimes possible to find shape parameters for an ideal triangulation which satisfy the edge and cusp criteria but fail the orientation condition. These  triangulations contain either negatively-oriented tetrahedra (which have complex edge parameters with negative imaginary part) or flat tetrahedra (which have real edge parameters from $\mathbb{R}\backslash\left\{0,1\right\}$) and generally do not give a hyperbolic structure on the manifold.

As a starting point of our investigation we take geometric triangulations and study the effects of Pachner 2-3 moves. A \emph{Pachner 2-3 move} replaces a bipyramidal union formed by two tetrahedra that share a common face with three new tetrahedra as shown in Figure ~\ref{fig:PachParameters}. This construction can be performed with respect to any pair of identified faces in the triangulation as long as they are from distinct tetrahedra. The inverse operation of such a move is known as a \emph{Pachner 3-2 move}. 
Figure ~\ref{fig:PachParameters} illustrates a Pachner 2-3 move replacing two tetrahedra with face pairing $A123=B230$ with three tetrahedra with new face pairings $C013=D013$, $D013=E012$ and $E013=C012$. 

We first point out that if the initial triangulation is known to be geometric, it can be easily determined if the resultant triangulation after the Pachner move is geometric. The argument is as follows: the cusp 
condition and the edge 
condition are both immediately satisfied if each of the three ``equatorial'' edges which were in both tetrahedra in the bipyramid has new edge parameter after the move given by the product of the same edge's parameters in each of the two tetrahedron before the Pachner 2-3 move, as shown in Figure ~\ref{fig:PachParameters}. 
Thus the only concern is that the resulting tetrahedra may be flat or negatively-oriented.
The resulting triangulation will be geometric if the resulting edge parameters of the new tetrahedra have positive imaginary parts.
A Pachner 2-3 (or 3-2) move on a geometric triangulation which results in a geometric triangulation is said to be a \emph{geometric Pachner move}. 

The \emph{Pachner graph} of a cusped 3-manifold is a graph describing the space of all ideal triangulations of that manifold. The Pachner graph's vertices are triangulations of the manifold (distinguished up to isomorphic relabelling) and there is an edge between two vertices 
for each Pachner 2-3 move relating the triangulations at the vertices. This graph is well-known to be connected if all triangulations of the manifold have at least two tetrahedra (see for example \cite{Ma1, Ma2, Pi}). In this paper, the connectivity of the subgraph of consisting of geometric triangulations and geometric Pachner moves will be studied.


Throughout this paper, triangulations will be distinguished up to isomorphisms of the triangulation which affect a relabelling of the vertices. Such equivalence classes of triangulations can be uniquely characterised by a text string known as an \emph{isomorphism signature} introduced by Burton ~\cite{BurtonIsoSig}. These isomorphism signatures can be used with computational package Regina ~\cite{BurtonRegina} to recover the triangulation from the string, and the package SnapPy ~\cite{SnapPy} can give numerical approximations of solutions to the consistency equations of Thurston ~\cite{ThurstonNotes}. For convenience, we provide the isomorphism signatures of many of the triangulations discussed in this paper so our work can be replicated. It is important to note, however, that the labelling of those triangulations used by the authors in this paper does generally not agree with the gluing information encoded in the isomorphism signature. The labellings of triangulations were instead chosen to present a simple inductive argument in the proofs and make the paper more readable.

Thurston famously observed that two regular ideal tetrahedra subject to the face identifications described in Table ~\ref{table:Fig8Tet} form a geometric triangulation of the figure eight knot complement. The next section will be devoted to showing this manifold admits infinitely many geometric ideal triangulations. Moreover, the infinite family of geometric triangulations discovered are connected as a subgraph of the Pachner graph.

\begin{figure}
\includegraphics[scale=0.5]{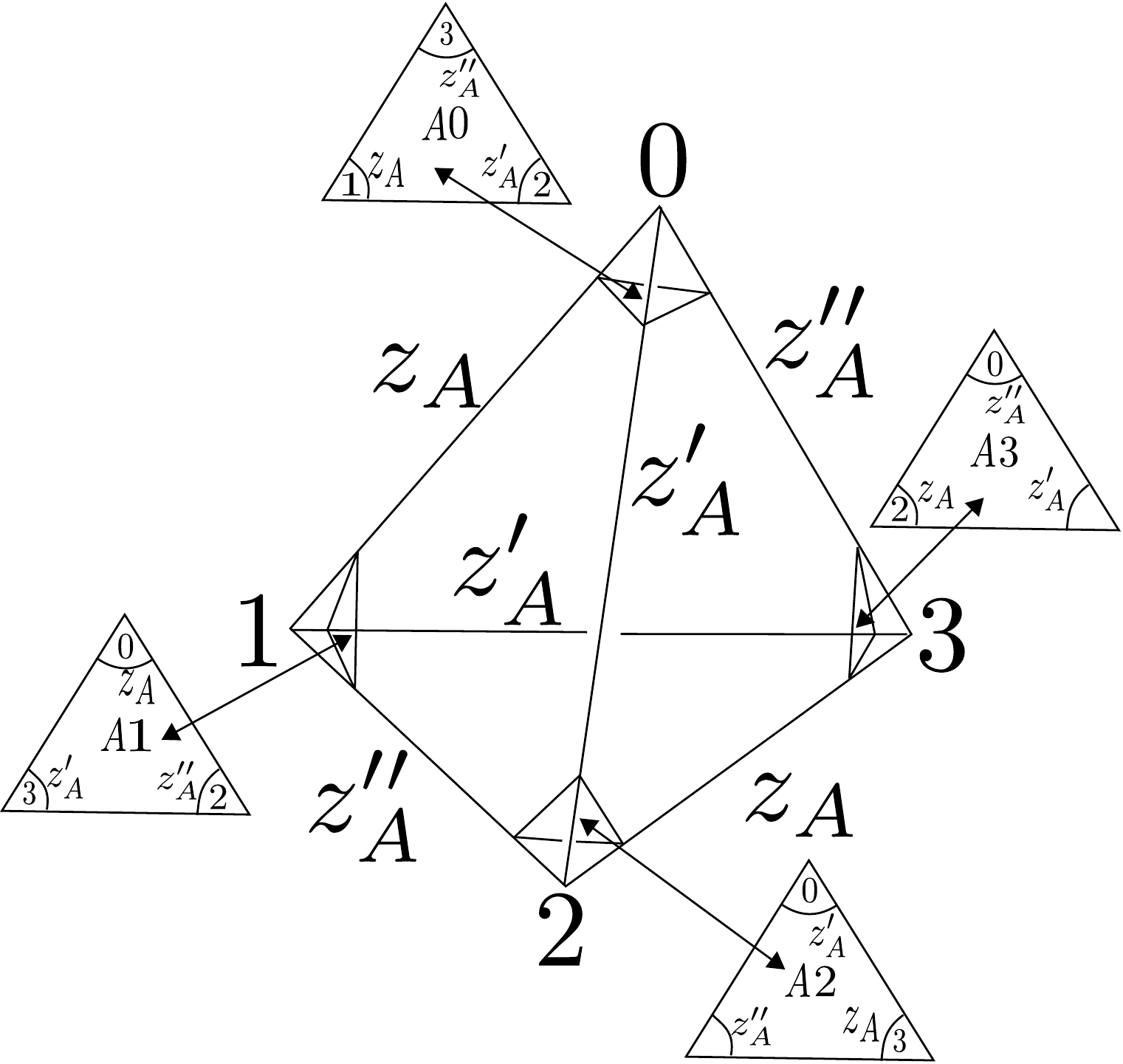}
\caption{\label{fig:LabelTet} The relationship between complex tetrahedral parameters on a single tetrahedron labelled $A$. Here $z_A$, $z'_A$ and $z''_A$ satisfy $z'_A = \frac{1}{1-z_A}$ and $z''_A = \frac{z_A-1}{z_A}$.
Horospherical cross sections at the vertices and their induced labellings are  also shown.
}
\end{figure}

\begin{figure}[h]
\includegraphics[scale=0.6]{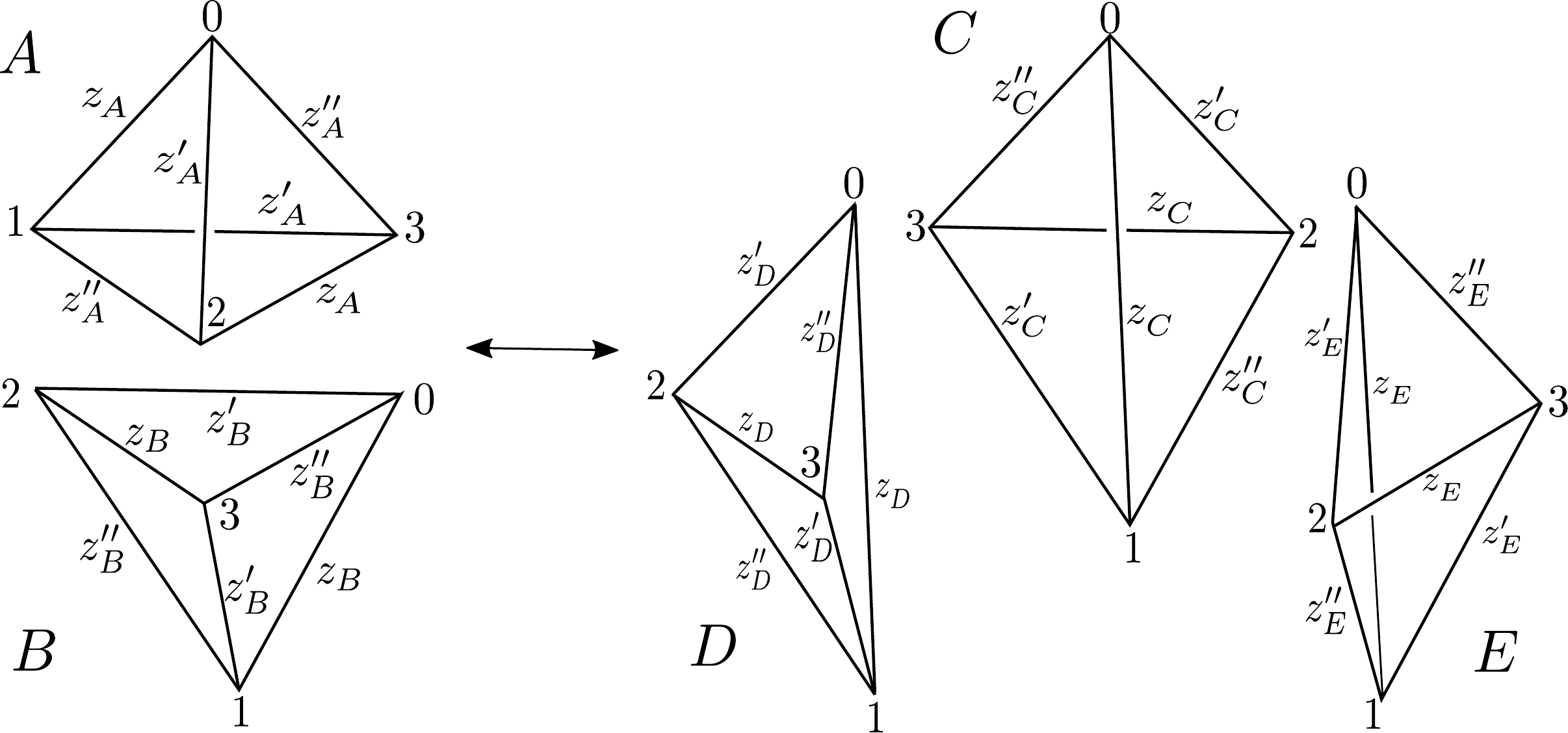}
\caption{\label{fig:PachParameters} How the tetrahedra edge parameters change after a Pachner 2-3 move through face pairing $A123=B230$. To maintain the equatorial angles formed in the bipyramid it is required that $z_C=z'_A z'_B$, $z_D=z''_A z_B$ and 
$z_E=z_A z''_B$.}
\end{figure}

\begin{table}
\begin{tabular}{|c|c|c|c|c|}
\hline
Tetrahedron & 012 & 013 & 023 & 123\\
\hline
\hline
A & B(013) & B(321) &B(021) & B(230)\\
\hline
B & A(032) & A(012) & A(312) & A(310) \\
\hline
\end{tabular}
\caption{\label{table:Fig8Tet} A decomposition of the figure eight knot complement into two tetrahedra.}
\end{table}

\begin{figure}[ht]
\begin{center}
\includegraphics[scale=0.8]{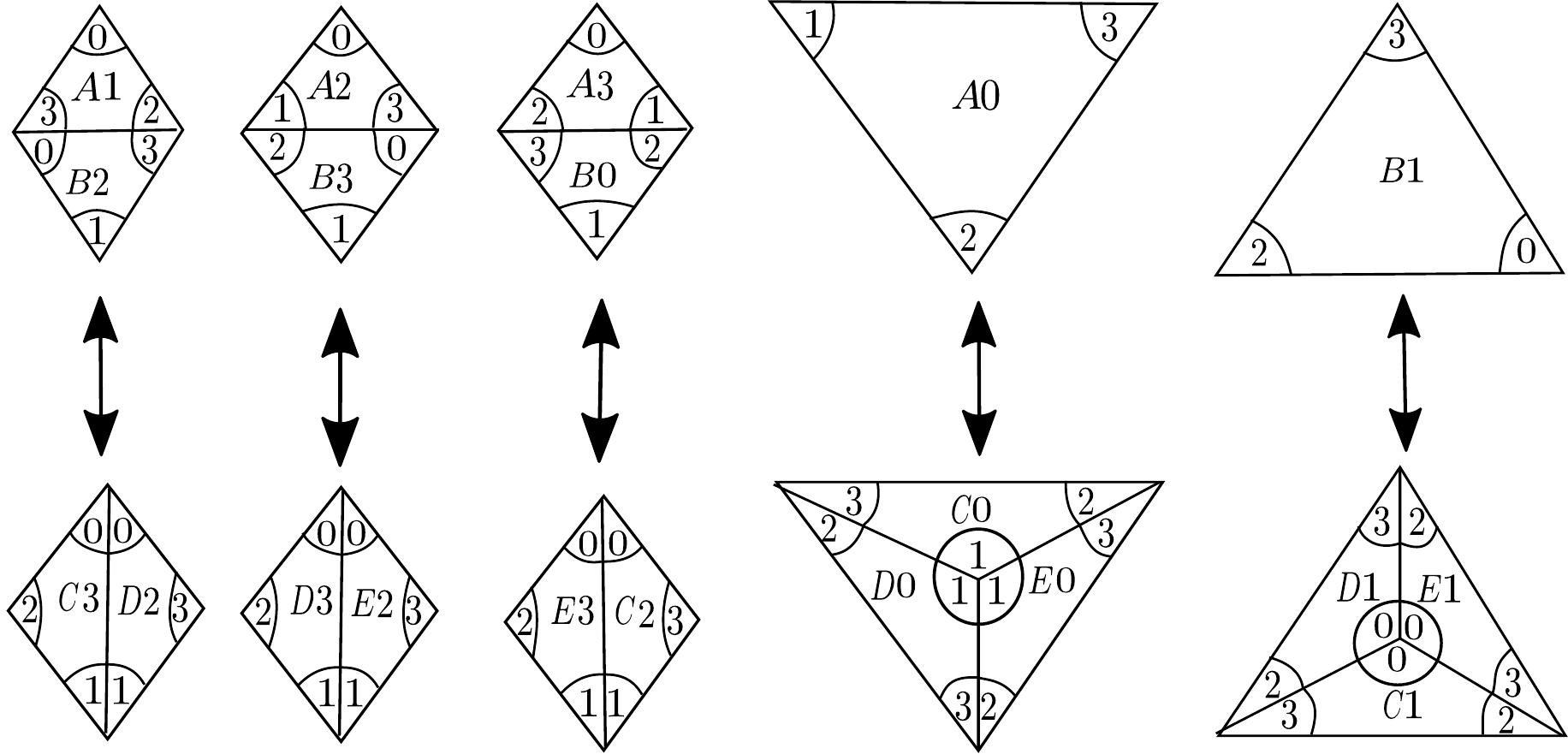}
\end{center}
\caption{\label{fig:PachnerCusps} How the cusp cross sections are changed by a 2-3 Pachner move through the face pairing $A123=B230$. (Compare with Figure ~\ref{fig:LabelTet} and Figure ~\ref{fig:PachParameters}).}
\end{figure}

\section{proof of main theorem}\label{sect:thmproof}
In this section the details of the proof of Theorem ~\ref{thm:mainthm} will be provided. First we define a sequence of labelled triangulations $T_n$, for $n \geq 2$, where $T_2$ is the triangulation of the figure eight knot complement with the labelling given in Table ~\ref{table:Fig8Tet}. The rest of the family is defined recursively where $T_{n+1}$ is obtained by first performing the Pachner 2-3 move through the $A123=B230$ face in $T_{n}$ with the new labelling determined by the convention established in Figure ~\ref{fig:PachParameters}. The resulting triangulation is then relabelled with tetrahedron $E$ relabelled as tetrahedron $A$ and tetrahedron $D$ relabelled as tetrahedron $B$;  this new labelled triangulation is denoted $T_{n+1}$.

The claim in Theorem ~\ref{thm:mainthm} will be proved by showing that all the triangulations $T_n$ for $n \geq 2$ are geometric. As previously mentioned, $T_2$ has a geometric structure, and to justify that the other $T_n$ are also geometric triangulations we use the following lemma. 

\begin{lemma}\label{lem:TechLemma}
Let $T$ be a labelled geometric ideal triangulation of a cusped hyperbolic 3-manifold $M$. Assume that there are two tetrahedra $A$ and $B$ in $T$, such that:
\begin{enumerate}
\item $A$ and $B$ share two faces subject to the following identifications $A123=B230$ and $A012=B013$,
\item $z_A=z_B$ and
\item $\text{Re}(z_A) < 1$, where $\text{Re}(~)$ denotes the real part.
\end{enumerate} 
Then the 2-3 move about the face $A123=B230$ (as in Figure 2) results in a labelled geometric triangulation that has two newly created  tetrahedra $E$ and $D$, that after relabelling by $E\mapsto A$ and $D \mapsto B$ satisfy (1), (2) and (3).  
\end{lemma}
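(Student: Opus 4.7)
The plan is to exploit the symmetry $z_A = z_B$ to pin down the new tetrahedra explicitly. Writing $z := z_A = z_B$ and applying $z'_A = 1/(1-z)$, $z''_A = (z-1)/z$ (with the analogous formulas for $B$), the parameter rules in Figure~\ref{fig:PachParameters} collapse to
\begin{align*}
z_C = z'_A z'_B = \frac{1}{(1-z)^2}, \qquad z_D = z''_A z_B = z - 1, \qquad z_E = z_A z''_B = z - 1.
\end{align*}
The key observation driving everything is $z_D = z_E$, which after the relabelling will deliver condition~(2) for free.

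I would first verify that the post-Pachner triangulation is geometric, which by the discussion preceding the lemma reduces to checking $\text{Im}(z_C), \text{Im}(z_D), \text{Im}(z_E) > 0$. Since $T$ is geometric, $\text{Im}(z) > 0$, so $z_D = z_E = z - 1$ is immediate. For $z_C$, hypothesis~(3) together with $\text{Im}(z) > 0$ places $1 - z$ in the open lower-right quadrant, so $z' = 1/(1-z)$ lies in the open first quadrant and $z_C = (z')^2$ has argument in $(0, \pi)$, giving the required sign.

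Next I would verify that the three hypotheses regenerate for the relabelled pair. Condition~(1) is a purely combinatorial check against Figure~\ref{fig:PachParameters}: the two faces shared by $E$ and $D$ after the 2-3 move become, under $E \mapsto A$ and $D \mapsto B$, precisely the identifications $A123 = B230$ and $A012 = B013$. Condition~(2) is the already-noted equality $z_E = z_D$. Condition~(3) holds because the new $z_A$ equals $z - 1$, giving $\text{Re}(z_A^{\text{new}}) = \text{Re}(z) - 1 < 0 < 1$, so the real-part hypothesis is not only preserved but pushed strictly further into the negative half-line.

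The only genuinely analytic step is the estimate for $z_C$, and the role of hypothesis~(3) is precisely to trap $1/(1-z)$ in the first quadrant so that squaring cannot send it below the real axis; without $\text{Re}(z) < 1$, the parameter $z_C$ could easily be flat or negatively oriented. The fact that the real-part condition is strengthened rather than merely preserved is what will allow Theorem~\ref{thm:mainthm} to be proved by iterating this lemma indefinitely, starting from $z = e^{i\pi/3}$ in $T_2$.
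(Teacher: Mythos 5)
Your proposal is correct and follows essentially the same route as the paper's proof: the same parameter computation $z_C=1/(1-z)^2$, $z_D=z_E=z-1$, the same verification that the imaginary parts are positive (your quadrant argument for $z_C$ just spells out what the paper asserts), and the same combinatorial check of the face identifications via Figure~\ref{fig:PachParameters}. No substantive differences to report.
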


\begin{proof}
A 2-3 move through the face pairing $A123=B230$ is described in Figure ~\ref{fig:PachParameters} and an accompanying depiction of the cusp cross section transformations is in Figure ~\ref{fig:PachnerCusps}. We first note that the new tetrahedra $E$ and $D$ share two faces subject to the identifications $E123=D230$ and $E012=D013$. The existing face identification $A012=B013$ becomes $D023=E312$ or equivalently $E123=D230$. That $E012=D013$ follows from the labelling convention established in Figure ~\ref{fig:PachParameters}. This information is depicted in Figure ~\ref{fig:TechLemmaCuspChanges} which shows the cusp cross sections with the necessary face pairings on the left which transform, as in Figure ~\ref{fig:PachnerCusps}, to produce the cusp cross section diagram on the right where the new face pairings can be recovered.

The new tetrahedral parameters are given by Figure \ref{fig:PachParameters}; they are: 
$$z_C=z'_A z'_B=\frac{1}{1-z_A}\frac{1}{1-z_A}=\frac{1}{(1-z_A)^2}$$
$$z_D=z''_A z_B=\frac{z_A-1}{z_A}z_A=z_A-1$$
$$z_E=z_A z''_B=z_A\frac{z_A-1}{z_A}=z_A-1 .$$ 
%
It follows immediately from these equations that the tetrahedra $D$ and $E$ have the same parameters (specifically $z_D=z_E=z_A-1$), and that $\text{Im}(z_D)>0$, $\text{Im}(z_E)>0$ and $\text{Re}(z_E)<1$. Since $\text{Re}(z_A)<1$ and $\text{Im}(z_A)>0$, 
it follows that $\text{Im}(z_C)>0$. Thus, each of the three new tetrahedra in the resultant labelled triangulation is geometric.  Hence the new labelled triangulation is a geometric triangulation, and relabelling $E$ and $D$ as $A$ and $B$ respectively yields that (1), (2) and (3) are still satisfied.
\end{proof}

\begin{figure}[h]
\includegraphics[scale=0.5]{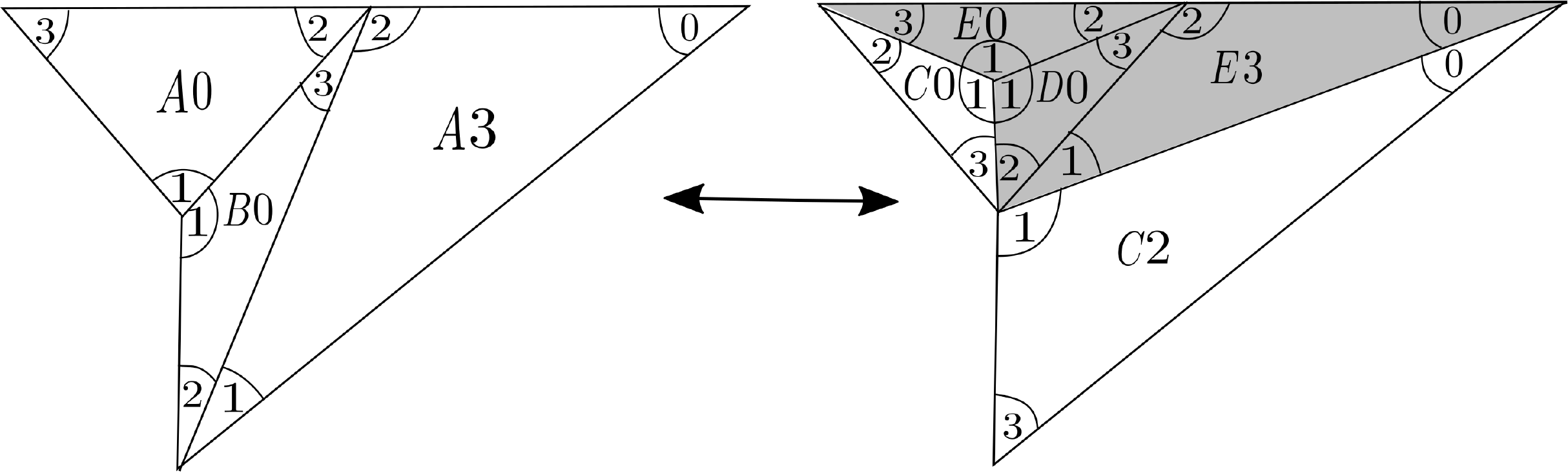}
\caption{\label{fig:TechLemmaCuspChanges} Details of how the tetrahedral cusps are changed by the 2-3 move. The shaded region highlights a self similarity between the triangulations before and after the specific 2-3 move.}
\end{figure}

\begin{proof}[Proof of Theorem \ref{thm:mainthm}]
The above argument shows that if $T_n$ is a geometric labelled triangulation satisfying the three conditions (1)--(3), then $T_{n+1}$ will also be a geometric labelled triangulation satisfying the same three conditions. 
The labelled geometric triangulation $T_2$ of the figure eight knot complement  (Table ~\ref{table:Fig8Tet}), has the face identifications required by Lemma ~\ref{lem:TechLemma}. Since the tetrahedra in this triangulation are regular, all edge parameters are $\frac{1+\sqrt{-3}}{2}$ and thus the figure eight knot complement meets all conditions required in Lemma ~\ref{lem:TechLemma}. Therefore, the described 2-3 move can be performed inductively  to show that $T_n$ is geometric for all $n \geq 2.$
\end{proof}

Thus a geometric triangulation for the figure eight knot complement with $n$ tetrahedra exists for all $n \geq 2$. The triangulations $T_n$ also have quite simple geometries, which stabilise as $n$ increases. In particular, after $k$ Pachner moves in this sequence, the proof of Lemma \ref{lem:TechLemma} shows that
the resultant triangulation $T_{k+2}$ has $2$ tetrahedra (which would be the site of the next 2-3 move) with tetrahedral parameters of 
$\frac{1-2k+\sqrt{-3}}{2}$. The other $k$ tetrahedra 
have distinct shapes with tetrahedral parameters $( \frac{2}{1-2m+\sqrt{-3}})^2$ for each integer value of $m$ from $1$ to $k$, where the tetrahedron with parameter $( \frac{2}{1-2m+\sqrt{-3}})^2$ appeared in the $m\text{th}$ Pachner move.

\begin{remark}
Following Thurston's notes ~\cite[Chapter 7]{ThurstonNotes}, if we define $\text{Vol}(\alpha)$ as the volume of an ideal tetrahedron with $\alpha$ as one of its associated edge parameters, and note that $\text{Vol}\left(\frac{1-2k+\sqrt{-3}}{2}\right)$ decays to $0$ as $n$ becomes large, then we have an alternative way to calculate the volume of the figure eight knot complement: 

The volume of the figure eight knot complement is equal to $$\sum\limits_{m=1}^{\infty} \text{Vol}\left(\left(\frac{2}{1-2m+\sqrt{-3}}\right)^2\right).$$
\end{remark}

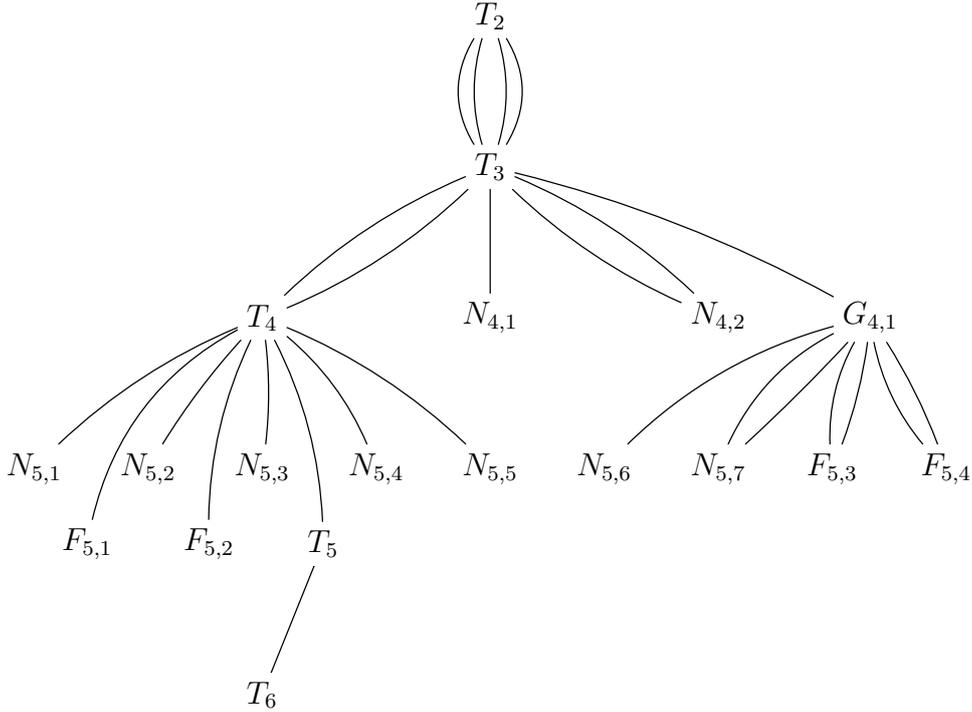
\begin{figure}[h]
\begin{displaymath}
\begin{xy}
(0,20)*+{T_2}="t2"; (0,0)*+{T_3}="t3"; (-30,-20)*+{T_4}="t4";%
(0,-20)*+{N_{4,1}}="n4_1";(30,-20)*+{N_{4,2}}="n4_2";%
(50,-20)*+{G_{4,1}}="g4_1"; (-60,-40)*+{N_{5,1}}="n5_1"; (-45,-40)*+{N_{5,2}}="n5_2"; (-30,-40)*+{N_{5,3}}="n5_3"; (-15,-40)*+{N_{5,4}}="n5_4"; (0,-40)*+{N_{5,5}}="n5_5";%
(-53,-50)*+{F_{5,1}}="d5_1"; (-37,-50)*+{F_{5,2}}="d5_2"; (-22,-50)*+{T_5}="t5"; (15,-40)*+{N_{5,6}}="n5_6"; (30,-40)*+{N_{5,7}}="n5_7"; (45,-40)*+{F_{5,3}}="d5_3";%
 (60,-40)*+{F_{5,4}}="d5_4";%
(-30,-70)*+{T_6}="t6";
{\ar@{-}@/^.5pc/"t2";"t3"};%
{\ar@{-}@/_.5pc/ "t2";"t3"};
{\ar@{-}@/^1pc/"t2";"t3"};%
{\ar@{-}@/_1pc/ "t2";"t3"};
{\ar@{-}@/^.5pc/"t3";"t4"};%
{\ar@{-}@/_.5pc/ "t3";"t4"};
{\ar@{-}"t3";"n4_1"}; 
{\ar@{-}@/^.5pc/"t3";"n4_2"};%
{\ar@{-}@/_.5pc/ "t3";"n4_2"};
{\ar@{-}@/^.5pc/"t3";"g4_1"}; 
{\ar@{-}@/_.5pc/"t4";"n5_1"};
{\ar@{-}@/_.2pc/"t4";"n5_2"};
{\ar@{-}@/^.2pc/"t4";"n5_3"};
{\ar@{-}@/^.5pc/"t4";"n5_4"};
{\ar@{-}@/^.5pc/"t4";"n5_5"};
{\ar@{-}@/_1.2pc/"t4";"d5_1"};
{\ar@{-}@/_.5pc/"t4";"d5_2"};
{\ar@{-}@/^.5pc/"t4";"t5"};
{\ar@{-}@/_.8pc/"g4_1";"n5_6"};
{\ar@{-}@/_.8pc/"g4_1";"n5_7"};
{\ar@{-}@/_.5pc/"g4_1";"d5_3"};
{\ar@{-}@/_.5pc/"g4_1";"d5_4"};
{\ar@{-}@/^.1pc/"g4_1";"n5_7"};
{\ar@{-}@/^.2pc/"g4_1";"d5_3"};
{\ar@{-}@/^.2pc/"g4_1";"d5_4"};
{\ar@{-}"t5";"t6"};
\end{xy}
\end{displaymath}
\caption{\label{fig:MainPachnerTree} Part of the Pachner graph (starting from the minimal triangulation) of the figure eight knot complement, which details the triangulations $T_n$ for low $n$ and nearby triangulations.}
\end{figure}

\begin{table}
\begin{tabular}{|c|c|}
\hline
Triangulation & Isomorphism Signature\\
\hline
\hline
$T_2$ & cPcbbbiht\\
\hline
$T_3$ & dLQbcccdegj\\
\hline
$T_4$ & eLPkbcdddmbkgw\\
\hline
$N_{4,1}$ & eLPkbcdddhgrvv\\
\hline
$N_{4,2}$ & eLPkbcdddmbvgg\\
\hline
$G_{4,1}$ & eLAkbccddhhnqw\\
\hline
$N_{5,1}$ & fLLQcbcdeeeacbdhl\\
\hline
$N_{5,2}$ & fLLQcbeddeehajvid\\
\hline
$N_{5,3}$ & fLLQcbcdeeeacjdqu\\
\hline
$N_{5,4}$ & fLLQcbcdeeelsculi\\
\hline
$N_{5,5}$ & fLLQcbcdeeelsgeap\\
\hline
$F_{5,1}$ & fvPQccdedeefokvss\\
\hline
$F_{5,2}$ & fLLQcbcedeedwufmd\\
\hline
$T_5$ & fLLQcbeddeehhjved\\
\hline
$N_{5,6}$ & fLAMcbccdeemejman\\
\hline
$N_{5,7}$ & fLLQcbeddeehajkiu\\
\hline
$F_{5,3}$ & fLLQcacdeeenkaicr\\
\hline
$F_{5,4}$ & fLLQcaddeeejuaplf\\
\hline
$T_6$ & gLLAQbeddfffhhjvenc\\
\hline
\end{tabular}
\caption{\label{table:IsoSigTable} A list of isomorphism signatures for the triangulations of the figure eight knot complement in Figure ~\ref{fig:MainPachnerTree}.}
\end{table}

The geometric triangulations of the figure eight knot complement in the sequence $T_n$ are by no means an exhaustive list. Figure ~\ref{fig:MainPachnerTree} shows part of the Pachner graph of the figure eight knot complement. Triangulations which are not in the sequence $T_n$ are named as either $G$, $N$ or $F$ depending on whether they are geometric triangulations or contain negatively oriented tetrahedra or flat tetrahedra respectively. The first subscript gives the number of tetrahedra in the triangulation and the second is the order in which they were encountered so that they may be distinguished. The isomorphism signatures for these triangulations are collected in Table ~\ref{table:IsoSigTable}. There is a distinct geometric 4 tetrahedra triangulation which is not in the sequence, though from a search of a neighbourhood of the triangulation $T_2$ in the Pachner graph it appears that the majority of triangulations are not geometric.

\begin{remark}
Neil Hoffman informed the authors of two geometric triangulations of the figure eight knot complement with 5 tetrahedra, neither of which is obtainable from any other triangulation by a Pachner 2-3 move, and which both have the property that every possible Pachner 2-3 move on these triangulations results in a non-geometric triangulation; thus, these triangulations are locally isolated as geometric triangulations. Their isomorphism signatures are fLLQcacdedejbqqww and fLLQccecddehqrwwn.
\end{remark}

\section{The Figure Eight Sister manifold}\label{sect:sister}
This section compares the results obtained for the figure eight knot complement to some work on the figure eight sister manifold (also known as m003). Each of these hyperbolic manifolds has a geometric triangulation consisting of two regular tetrahedra, with differing face identifications, and have the smallest possible volume of orientable cusped hyperbolic 3-manifolds~\cite{CaoMeyerhoff}. These triangulations are \emph{minimal} in the sense that they use the smallest number of tetrahedra possible to produce the manifold; a property that can be easily determined in these simple cases by considering all possible ways of identifying faces of tetrahedra to produce a manifold. However, unlike the figure eight knot complement, only a finite number of distinct isomorphism classes of geometric triangulations could be obtained by performing geometric 2-3 moves starting from the minimal triangulation.

\begin{table}
\begin{tabular}{|c|c|c|c|c|}
\hline
Tetrahedron & 012 & 013 & 023 & 123\\
\hline
\hline
A & B(301) & B(230) &B(021) & B(132)\\
\hline
B & A(032) & A(120) & A(301) & A(132) \\
\hline
\end{tabular}
\caption{\label{table:Fig8Sis} A decomposition of the figure eight knot complement sister manifold into two tetrahedra.}
\end{table}

\begin{theorem}
There are only finitely many geometric triangulations which may be obtained by a sequence of geometric 2-3  moves starting from the minimal triangulation of the figure eight sister manifold.
\end{theorem}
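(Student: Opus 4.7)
The plan is to exhaustively enumerate the connected component of the minimal triangulation inside the subgraph of the Pachner graph whose vertices are geometric triangulations and whose edges are geometric 2-3 moves, and to show this component is finite. I would start by recording the minimal triangulation of the figure eight sister (Table~\ref{table:Fig8Sis}) together with its regular shape parameters $z_A = z_B = \frac{1+\sqrt{-3}}{2}$, and enumerating the finitely many face pairings through which a 2-3 move is possible.

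For each such face pairing, the formulas in Figure~\ref{fig:PachParameters} give the three resulting tetrahedral shape parameters, and checking whether all three have positive imaginary part classifies the result as geometric, flat, or negatively oriented. Each geometric result would be catalogued by its isomorphism signature to detect duplicates up to the isomorphism equivalence used throughout the paper. Iterating this procedure frontier by frontier produces the desired connected component. Termination is the key step: one must verify that at some finite depth, every geometric triangulation in the frontier, subjected to every admissible 2-3 move, either produces a non-geometric triangulation or one already catalogued.

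The main obstacle is establishing termination. In the figure eight knot case this step \emph{failed}, because the identity $z_D = z_E = z_A - 1$ arising from $z_A = z_B$ together with the face identification $A012=B013$ preserved the hypotheses of Lemma~\ref{lem:TechLemma} and drove the infinite geometric family $T_n$. For the figure eight sister the relevant face identifications in Table~\ref{table:Fig8Sis} differ, and the analogous recursion should either destroy the required hypothesis after one step or force the shape parameters towards the real axis within a bounded number of iterations, so no direct analogue of Lemma~\ref{lem:TechLemma} is available. I would therefore handle termination in two complementary ways: first, a theoretical check that no self-similar configuration of Lemma~\ref{lem:TechLemma} type can be sustained in the sister geometry, which rules out the most obvious candidate infinite families by tracing the induced map on shape parameters for the few face-pairing patterns that can repeat under relabelling; and second, an exhaustive computational search using Regina and SnapPy that confirms all branches close up at a small tetrahedron count, yielding a finite table analogous to Table~\ref{table:IsoSigTable}.

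I expect the hard part to be ruling out cleverly disguised infinite families that do not fit a clean self-similar template, since in principle one might encounter a longer cycle of face-pairing patterns, say of period greater than one, that maintains positive imaginary parts indefinitely. The computer search is what converts this into a finite check: once the breadth-first exploration from the minimal triangulation stabilises to a fixed set of isomorphism signatures with no new geometric neighbours, finiteness of the component is certified, which is the substance of the theorem.
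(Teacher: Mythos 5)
Your proposal is correct and is essentially the paper's own argument: a breadth-first enumeration from the minimal sister triangulation, classifying each 2-3 move via the shape-parameter formulas of Figure~\ref{fig:PachParameters}, which closes up after the single geometric successor $G'_{3,1}$ and then $G'_{4,1}$, all of whose 2-3 moves yield flat or negatively oriented tetrahedra. As you correctly observe in your final paragraph, the stabilisation of this finite search by itself certifies finiteness of the component, so the extra theoretical work on ruling out self-similar families is not needed.
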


\begin{proof}
The minimal triangulation of the figure eight sister manifold consists of regular ideal tetrahedra subject to the identifications shown in Table ~\ref{table:Fig8Sis}. A 2-3 move through any of the faces results in the same isomorphism class of geometric triangulation, $G'_{3,1}$. From this triangulation, there are five possible sites of 2-3 moves, resulting in 3 isomorphism classes of negatively oriented triangulations and a single geometric triangulation, $G'_{4,1}$. It can now be seen that performing a 2-3 move on any suitable face of this triangulation results in a triangulation with either negatively-oriented or flat tetrahedra. These results are summarised in Figure ~\ref{fig:SisterPachner} (where $G'$, $N'$ and $F'$ represent geometric, negatively-oriented and partially flat triangulations respectively), and the isomorphism signatures of these triangulations are provided in Table ~\ref{table:SisterIsoSigTable}.
\end{proof}

\begin{figure}[h]
\begin{displaymath}
\begin{xy}
(0,20)*+{G'_{2,1}}="g2_1s"; (0,0)*+{G'_{3,1}}="g3_1s"; (-40,-20)*+{G'_{4,1}}="g4_1s";%
 (-15,-20)*+{N'_{4,1}}="n4_1s"; (15,-20)*+{N'_{4,2}}="n4_2s"; (45,-20)*+{N'_{4,3}}="n4_3s";%
(-45,-40)*+{F'_{5,1}}="d5_1s"; (-15,-40)*+{N'_{5,1}}="n5_1s"; (15,-40)*+{N'_{5,2}}="n5_2s"; (45,-40)*+{N'_{5,3}}="n5_3s";
{\ar@{-}@/^.5pc/"g2_1s";"g3_1s"};%
{\ar@{-}@/_.5pc/ "g2_1s";"g3_1s"};
{\ar@{-}@/^1pc/"g2_1s";"g3_1s"};%
{\ar@{-}@/_1pc/ "g2_1s";"g3_1s"};
{\ar@{-}@/_1pc/"g3_1s";"g4_1s"};
{\ar@{-}@/_.5pc/"g3_1s";"n4_1s"};
{\ar@{-}@/^.5pc/"g3_1s";"n4_1s"};
{\ar@{-}@/^.5pc/"g3_1s";"n4_2s"};
{\ar@{-}@/^.5pc/"g3_1s";"n4_3s"};
{\ar@{-}"g4_1s";"d5_1s"};
{\ar@{-}@/^.5pc/"g4_1s";"n5_1s"};
{\ar@{-}@/_.5pc/"g4_1s";"n5_1s"};
{\ar@{-}@/^.5pc/"g4_1s";"n5_2s"};
{\ar@{-}@/^.5pc/"g4_1s";"n5_3s"};
\end{xy}
\end{displaymath}
\caption{\label{fig:SisterPachner} Part of the Pachner graph of the figure eight sister starting from its minimal triangulation $G'_{2,1}$.}
\end{figure}
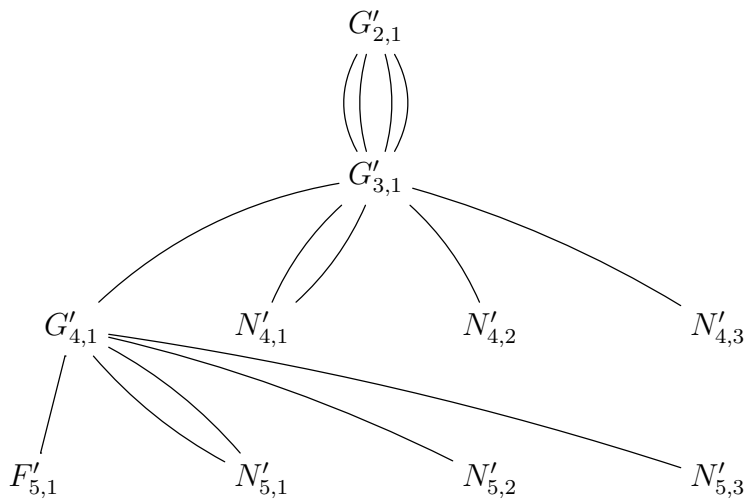

\begin{table}
\begin{tabular}{|c|c|}
\hline
Triangulation & Isomorphism Signature\\
\hline
\hline
$G'_{2,1}$ & cPcbbbdxm\\
\hline
$G'_{3,1}$ & dLQacccjgnb\\
\hline
$G'_{4,1}$ & eLAkaccddngbqw\\
\hline
$N'_{4,1}$ & eLPkbcdddlgjss\\
\hline
$N'_{4,2}$ & eLAkaccddjgnxn\\
\hline
$N'_{4,3}$ & eLAkbbcdddhgaj\\
\hline
$F'_{5,1}$ & fLLQcacdeeenkadnj\\
\hline
$N'_{5,1}$ & fLLQcadedeejaulxk\\
\hline
$N'_{5,2}$ & fLAMcaccdeensnxhj\\
\hline
$N'_{5,3}$ & fLLQccceddetnacwn\\
\hline

\end{tabular}
\caption{\label{table:SisterIsoSigTable} A list of isomorphism signatures for the triangulations of the figure eight sister manifold in Figure ~\ref{fig:SisterPachner}.}
\end{table}

It is worth noting that this does not deny the existence of infinitely many geometric triangulations of the figure eight sister, only an infinite family of geometric triangulations connected by geometric 2-3 Pachner moves and including the minimal triangulation (as was the case for the figure eight knot complement).

\section{Future work}
The figure eight knot complement is the simplest example of a once-punctured torus bundle. A natural question arising from the work in this paper is if there is an extension of Theorem ~\ref{thm:mainthm} to other once-punctured torus bundles. Investigations into the manifold m009 (in SnapPy notation), the next simplest once-punctured torus bundle, revealed that a version of Theorem ~\ref{thm:mainthm} is true for m009. This follows from an argument similar to the one used for the proof for the case of the figure eight knot complement. However the argument is somewhat more complicated in this case as there are two sites in the boundary torus which alternate as the site of the new edge class after each Pachner move, so some extra care needs to be taken. Some further investigations on m023 and other simple once-punctured torus bundles using the computational packages Regina ~\cite{BurtonRegina} and SnapPy ~\cite{SnapPy} suggest it is quite possible that such a family could exist for many once-punctured torus bundles. 

Many of the techniques used and ideas explored throughout this investigation can be used more generally to study cusped hyperbolic 3-manifolds and their triangulations, not just the family of once-punctured torus bundles. It would be of interest to investigate these ideas further and to try to answer the question:

\begin{question}
Which cusped hyperbolic 3-manifolds admit infinitely many geometric ideal triangulations?
\end{question}

%



\end{document}